\theoremstyle{plain}
\newtheorem{theorem}{Theorem}[section]
\newtheorem{corollary}[theorem]{Corollary}
\newtheorem{definition}[theorem]{Definition}
\newtheorem{lemma}[theorem]{Lemma}
\newtheorem{proposition}[theorem]{Proposition}
\theoremstyle{definition}
\theoremstyle{remark}
\newtheorem{examp}[theorem]{Example}
\newtheorem{remar}[theorem]{Remark}
\newenvironment{example}{\begin{examp}}{\hfill $\Diamond$\end{examp}}
\newenvironment{remark}{\begin{remar}}{\hfill $\Diamond$\end{remar}}
\DeclareMathOperator{\gr}{Gr}
\DeclareMathOperator{\hgr}{\mathfrak{Gr}}
\DeclareMathOperator{\rank}{rank}
\DeclareMathOperator{\rk}{rk}
\newcommand{\fE}{\mathfrak E}
\newcommand{\fQ}{\mathfrak Q}
\newcommand{\fF}{\mathfrak F}
\newcommand{\gS}{\mathfrak S}
\newcommand{\cO}{\mathcal O}
\newcommand{\C}{{\mathbb C}}
\newcommand{\R}{{\mathbb R}}
\newcommand{\Q}{{\mathbb Q}}
\newcommand{\Rplus}{{\mathbb R}_{> 0}}
\newcommand{\mumin}{\mu_{\min}}
\title[Positivity for Higgs bundles]{POSITIVITY FOR HIGGS VECTOR BUNDLES: \\[5pt]  CRITERIA AND APPLICATIONS}
\begin{document} \maketitle
\markright{\sc U. Bruzzo, A. Capasso, B. Gra\~na Otero}
\thispagestyle{empty}



\thispagestyle{empty} \vspace{-3mm}
\begin{center}{\sc Ugo Bruzzo,$^{abcd}$  Armando Capasso$^{e}$ and Beatriz Gra\~na Otero$^{f}$} \\[5pt]
\small
$^a$ SISSA (International School for Advanced Studies),  \\ Via Bonomea 265, 34136 Trieste, Italy \\
$^b$ Departamento de Matem\'atica, UFPB  \\ (Universidade Federal da Para\'iba), Jo\~ao Pessoa, PB, Brazil \\
$^c$ INFN (Istituto Nazionale di Fisica Nucleare), Sezione di Trieste \\
$^d$ IGAP (Institute for Geometry and Physics), Trieste  \\
$^e$ Dipartimento di Matematica e Fisica, Universit\`a degli Studi Roma Tre,\\ Largo San Leonardo Murialdo 1,   00146 Roma, Italy\\
$^f$ Departamento de Matem\'aticas  and IUFFYM   (Instituto de F\'\i sica \\ 
Fundamental y Matem\'aticas), Universidad de Salamanca, \\
Plaza de la Merced 1-4, 37008 Salamanca, Spain \\[5pt] 
Email: {\tt bruzzo@sissa.it, armando.capasso@uniroma3.it, beagra@usal.es}
\end{center}

%
%
\bigskip
\begin{abstract}
Working in the category of smooth projective varieties over an algebraically closed field of characteristic 0, we review notions of ampleness and numerical nefness for Higgs bundles which ``feel" the Higgs field and formulate criteria of the Barton-Kleiman type for these notions. We give an application to minimal surfaces of general type that saturate the Miyaoka-Yau inequality, showing that their cotangent bundle is ample. This will use results by Langer that imply that also for varieties over algebraically closed field of characteristic zero the so-called Simpson system is stable.
\end{abstract}

\vfill\begin{minipage}{\textwidth} \small
\parbox{\textwidth}{\hrulefill} \\
Date: Revised 8 August 2023 \\
Corresponding author: Ugo Bruzzo\\
MSC 2020:  14D07, 14F06, 14H45, 14J29, 14J60 \\
Keywords: Higgs bundles, semistability, curve semistability, ampleness, numerical effectiveness, surfaces of general type, Miyaoka-Yau inequality\\
ORCID: 0000-0001-8726-1729,    0009-0001-5463-7221, 0000-0003-1666-2685\\
U.B.'s research was partly supported by Bolsa de Produtividade 313333/2020-3 of Brazilian CNPq, by  PRIN 2017  ``Moduli theory and birational classification" and INdAM-GNSAGA; B.G.O.'s research was partly supported by Grant PID2021-128665NB-I00 funded by MCIN/AEI/ 10.13039/501100011033,  by ``ERDF A way of making Europe,''  and Universidad de Salamanca through Programa XIII.

\end{minipage}

\section{Introduction}
Positivity of line bundles plays an important role in algebraic geometry, in the form of the notions of ample and numerically effective line bundle. The concept of ampleness was extended to vector bundles by Hartshorne in \cite{Hartshorne}, and that of numerically effective vector bundle was introduced by Campana and Peternell in \cite{C:P}. Useful criteria for the characterization of ample and numerically effective vector bundles appeared over the years, see \cite{L:RK}, in particular the so-called   Barton-Kleiman criterion.

In \cite{B:GO:1, B:GO:3} two of the present authors defined notions of ampleness and numerical effectiveness for Higgs bundles, using a generalization of the Grassmann bundle of a vector bundle, called the \emph{Higgs Grassmannian scheme,} introduced in \cite{B:HR}. The basic idea is to formulate positivity notions that ``feel'' the Higgs field, and reduce to the classical ones when the Higgs field is zero. The main purpose of this paper is to formulate criteria for ampleness and numerical effectiveness of Higgs bundles of the Barton-Kleiman type (see also \cite{B:B:G}). This is done in Section \ref{3}, while in Section \ref{2} we recall the main definitions about Higgs bundles and their positivity. We also review the notion of {\em curve semistability}, which will play an important role. We shall do all this for Higgs bundles on smooth projective varieties defined over an algebraically closed field of characteristic 0.

 In Section \ref{4} we give an application of the criteria for the ampleness of Higgs bundles to surfaces of general type. Miyaoka and Yau \cite{Miyaoka-Chern,Yau1} in 1977 independently proved  that the Chern classes of a minimal surface $X$ of general type over $\C$ satisfy the inequality 
 \begin{equation} \label{ineq}3c_2(X)\ge c_1(X)^2.
 \end{equation}The surfaces that saturate this inequality, i.e., such that $3c_2(X)= c_1(X)^2$, were studied by Yau \cite{Yau1};  they are quotients of the unit ball in $\C^2$ by the action of a finite group. Simpson in 1988  \cite{S:CT:0}  proved the inequality \eqref{ineq} using Higgs bundles; he proved that the vector bundle $E=\Omega^1_X\oplus \cO_X$, called the {\em Simpson system}, has a natural Higgs field and the resulting Higgs bundle is stable. The Bogomolov inequality for this Higgs bundle then yields the inequality \eqref{ineq}. Moreover Miyaoka in 1987 \cite{Miyaoka-Kataka} proved that the cotangent bundle of these surfaces is ample.
 
Again Miyaoka in 1985 \cite{Miyaoka-Kodaira} proved that this inequality holds for surfaces of general type over an algebraically closed field of characteristic 0. The work of Langer \cite{L:A}, who proved the Bogomolov inequality and the stability of the Simpson system in positive characteristic, allows one to extend Simpson's proof to this setting. Now, the Simpson system is actually ample as a Higgs bundle and one can apply our criterion of the Barton-Kleiman type to show that the cotangent bundle of $X$ is stable also in the case of a field of characteristic 0.

Finally we briefly sketch a partial generalization to higher-dimensional projective varieties.

\bigskip
\section{H-ample and H-nef Higgs bundles}\label{2}
Let $X$ be a smooth projective variety over an algebraically closed field $\mathbb K$ of characteristic 0. We recall that a Higgs sheaf
is a pair $\fF=(F,\phi)$, where $F$ is a coherent $\cO_X$-module, and $\phi\colon F \to F\otimes\Omega^1_X$ is an 
$\cO_X$-linear morphism such that the composition 
$$\phi\wedge\phi \colon F \xrightarrow{\phi} F\otimes\Omega^1_X  \xrightarrow{\phi\times\mathrm{id}}   F\otimes \Omega^1_X\otimes\Omega^1_X \to F\otimes\Omega^2_X$$
is zero. A Higgs bundle is a Higgs sheaf with $F$ locally free. Semistability and stability are defined as for vector bundles but only with reference to $\phi$-invariant subsheaves.

The notion of curve semistable Higgs bundle was introduced in \cite{B:HR,B:GO:3}, generalizing a class of ordinary vector bundles that was studied in \cite{N,B:HR}.

\begin{definition}\label{curvesemistable}
A Higgs bundle $\fE$ is {\em curve semistable} if, for every morphism $f\colon C \to X$ where $C$ is a smooth irreducible projective curve, the pullback bundle $f^\ast \fE$ is semistable.
\end{definition}

We consider the characteristic class $$\Delta (E) = c_2(E)- \frac{r-1}{2r} c_1(E)^2 \in A^2(X)_\Q,$$called the {\em discriminant} of $E$ (here $r=\rank E$).

\begin{theorem}{\rm \cite{B:HR}}
Let $\fE=(E, \phi)$ be a Higgs bundle on $X$, which is semistable with respect to some polarization. If $\Delta(E) = 0$ then $\fE$ is curve semistable. \label{thm:css}
\end{theorem}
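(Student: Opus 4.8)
The plan is to reduce the statement to a numerical-flatness property of the endomorphism Higgs bundle and then transport that property to an arbitrary curve. First I would pass from $\fE=(E,\phi)$ to its endomorphism Higgs bundle $\operatorname{End}(\fE)=(\operatorname{End}(E),\phi_{\operatorname{End}})$. Since we work in characteristic zero, semistability of $\fE$ implies semistability of $\fE\otimes\fE^\vee=\operatorname{End}(\fE)$ (the tensor product of $H$-semistable Higgs bundles is $H$-semistable), and this bundle satisfies $c_1(\operatorname{End}(E))=0$ while $\Delta(\operatorname{End}(E))=2r\,\Delta(E)=0$. Thus $\operatorname{End}(\fE)$ is an $H$-semistable Higgs bundle of slope $0$ with vanishing discriminant. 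I would also record the reduction that it suffices to prove $\operatorname{End}(f^\ast\fE)=f^\ast\operatorname{End}(\fE)$ is semistable for every $f\colon C\to X$, since semistability of the endomorphism bundle forces semistability of $f^\ast\fE$ itself; the whole theorem then collapses to a statement about $\operatorname{End}(\fE)$.

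The core step is to show that a semistable Higgs bundle $\mathfrak{V}=(V,\psi)$ with $c_1(V)=0$ and $\Delta(V)=0$ is \emph{numerically flat}, meaning that both $\mathfrak{V}$ and $\mathfrak{V}^\vee$ are H-nef. Here I would invoke the Bogomolov inequality for semistable Higgs bundles (Simpson in characteristic zero, and Langer more generally) together with an analysis of its boundary case: an $H$-semistable Higgs bundle with vanishing discriminant is an iterated extension of $\mu_H$-stable Higgs bundles of the same slope and with vanishing discriminant, and each stable piece corresponds, under the nonabelian Hodge correspondence, to a projectively flat bundle. Since $c_1(V)=0$, these pieces are genuinely numerically flat, and an extension of numerically flat Higgs bundles is numerically flat, so $\mathfrak{V}=\operatorname{End}(\fE)$ is numerically flat. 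This is the main obstacle: everything hinges on the deep input that vanishing of the discriminant, combined with semistability, upgrades a mere numerical inequality to honest numerical flatness.

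Finally I would conclude by functoriality together with a slope computation on the curve. Numerical flatness is preserved under pullback, because H-nefness is tested, as in the classical case, by nefness of a tautological class on the Higgs Grassmannian scheme, and that class pulls back to a nef class. Hence for any $f\colon C\to X$ the Higgs bundle $f^\ast\operatorname{End}(\fE)=\operatorname{End}(f^\ast\fE)$ is numerically flat on $C$. On a smooth projective curve, H-nefness of a Higgs bundle with $c_1=0$ gives $\mumin\ge 0$ for the Higgs Harder--Narasimhan filtration, while H-nefness of its dual gives the reverse bound on the maximal slope; together these force every Higgs HN-slope to vanish, so $\operatorname{End}(f^\ast\fE)$ is Higgs-semistable of slope $0$. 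Semistability of the endomorphism bundle then yields semistability of $f^\ast\fE$, and since $f$ was arbitrary, $\fE$ is curve semistable.
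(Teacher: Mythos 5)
The paper does not prove this statement; it is quoted verbatim from the reference \cite{B:HR}, so there is no in-paper proof to compare against. Your proposal is essentially a reconstruction of the argument of that reference: reduce to $\operatorname{End}(\fE)$ (or a normalized twist), use the boundary case of the Bogomolov inequality together with the nonabelian Hodge correspondence to show that a semistable Higgs bundle with $c_1=0$ and $\Delta=0$ is Higgs-numerically flat, pull back to curves, and recover semistability from $\mumin\ge 0$ plus degree zero. The only points you leave implicit are the Hodge-index computation showing that the Jordan--H\"older factors inherit vanishing discriminant and numerically trivial $c_1$, and the passage from $\C$ (where Simpson's correspondence lives) to a general algebraically closed field of characteristic $0$ (Lefschetz principle or Langer's algebraic methods); both are standard and consistent with how the cited source and this paper handle them.
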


The opposite implication, which holds in the case of ordinary bundles, for Higgs bundles is a conjecture that is still largely open; for a discussion and more or less recent result see \cite{B:LG,L:LG,B:L:LG,B:C,L:A,B:GO:HR}. 
 
We recall the main definitions concerning H-ample and H-nef Higgs bundles. Let $\fE=(E,\phi)$ be a rank $r$ Higgs bundle
, and let $0<s<r$ be an integer number. Let ${p_s\colon\gr_s(E)\to X}$ be the Grassmannian bundle parametrizing rank $s$ locally free quotients of $E$ (see~\cite{F:W}). Consider the short exact sequence of vector bundles over $\gr_s(E)$ 
\begin{equation}\label{UnivBndlSeq}
\xymatrix{
0\ar[r] & S_{r-s,E}\ar[r]^(.55){\psi} & p_s^\ast E\ar[r]^(.45){\eta} & Q_{s,E}\ar[r] & 0
},
\end{equation}
where $S_{r-s,E}$ is the universal rank $r-s$ subbundle of $p_s^\ast E$ and $Q_{s,E}$ is the universal rank $s$ quotient. One defines the closed subschemes $\hgr_s(\fE)\subset\gr_s(E)$ (the \emph{Higgs Grassmannian schemes}) as the zero loci of the composite morphisms
\begin{displaymath}
\left(\eta\otimes\text{Id}_{\Omega^1_X}\right)\circ p_s^\ast(\phi)\circ\psi\colon S_{r-s,E}\to Q_{s,E}\otimes p_s^\ast\Omega_X^1.
\end{displaymath}
The restriction of the previous sequence to $\hgr_s(\fE)$ yields a universal short exact sequence
\begin{displaymath}
\xymatrix{
0\ar[r] & \mathfrak{S}_{r-s,\fE}\ar[r]^(.55){\psi} & \rho_s^\ast\fE\ar[r]^(.45){\eta} & \fQ_{s,\fE}\ar[r] & 0,
}
\end{displaymath}
where $\fQ_{s,\fE}=Q_{s,E|\hgr_s(\fE)}$ is equipped with the quotient Higgs field induced by $\rho_s^\ast\phi$, where ${\rho_s=p_{s|\hgr_s(\fE)}}$.
The scheme $\hgr_s(\fE)$ enjoys the usual universal property: for a morphism of varieties $f\colon Y\to X$, the morphism $g\colon Y \to \gr_s(E)$ given by a rank $s$ quotient $Q$ of $f^\ast E$ factors through $\hgr_s(\fE)$ if and only if $\phi$ induces a Higgs field on $Q$.
\begin{definition}\cite[Definition 2.3]{B:GO:3}\label{def1.1}
A Higgs bundle $\fE=(E,\phi)$ of rank one is Higgs ample (H-ample for short) if $E$ is ample in the usual sense. If $\rank(\fE)\geq2$, one inductively defines H-ampleness by requiring that
\begin{enumerate}
\item the Higgs bundles ${\mathfrak Q}_{s,\fE}$ are H-ample for all $s$, and
\item the determinant line bundle $\det(E)$ is ample.
\end{enumerate}
\end{definition}
The condition on the determinant cannot be omitted as Example 2.5 in \cite{B:GO:1} shows.

\begin{remark}The notion of H-ampleness coincides with the usual ampleness when the Higgs field is zero.\end{remark}
\begin{remark}\label{rem1.1}
The recursive condition in this definition can be recast as follows. Let $\,1\leq s_1<s_2<\dotsc<s_k<r$ and let $\fQ_{{(s_1, \cdots, s_k)},\fE}$ be the rank $s_1$ universal Higgs quotient bundle over $\hgr_{s_1}\left(\fQ_{(s_2,\dotsc,s_k),\fE}\right)$, obtained by taking the successive universal Higgs quotient bundles of $\fE$ of rank $s_k$, then $s_{k-1}$, all the way to rank $s_1$. The H-ampleness condition for $\fE$ is equivalent to requiring that all line bundles $\det(\fE)$ and $\det\left(\fQ_{{(s_1, \cdots, s_k)},\fE}\right)$ are ample.
\end{remark}
\noindent We prove now some properties of H-ample Higgs bundles which will be useful in the sequel.
\begin{proposition}\label{prop1.1} Let $\fE$ be a Higgs bundle on a smooth projective variety $X$.

(1) Let $f\colon Y\to X$ be a finite morphism of smooth projective varieties. If $\fE$ is H-ample then $f^\ast\fE$ is H-ample. Moreover, if $f$ is also surjective and $f^\ast\fE$ is H-ample then $\fE$ is H-ample.

(2) If $\fE$ is H-ample then every quotient Higgs bundle of $\fE$ is H-ample.
\end{proposition}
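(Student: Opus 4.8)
The engine of both parts is Remark~\ref{rem1.1}, which reduces H-ampleness of a Higgs bundle to a purely line-bundle condition: the ampleness of $\det(\fE)$ together with the determinants $\det(\fQ_{(s_1,\dots,s_k),\fE})$ of all its iterated universal Higgs quotient bundles. The plan is to combine this with the functoriality of the Higgs Grassmannian construction. The latter follows from the universal property recalled above: for a morphism $f\colon Y\to X$ and an integer $s$, a $T$-point of $\hgr_s(f^\ast\fE)$ is a rank $s$ Higgs quotient of the pullback of $f^\ast\fE$, equivalently of the pullback of $\fE$, so that $\hgr_s(f^\ast\fE)\cong\hgr_s(\fE)\times_X Y$ and, along the induced map $\tilde f\colon\hgr_s(f^\ast\fE)\to\hgr_s(\fE)$, one has $\fQ_{s,f^\ast\fE}\cong\tilde f^\ast\fQ_{s,\fE}$. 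Iterating, every iterated Higgs Grassmannian of $f^\ast\fE$ is a base change along $f$ of the corresponding one for $\fE$, and each determinant line bundle of Remark~\ref{rem1.1} for $f^\ast\fE$ is the pullback, by a morphism obtained from $f$ by successive base changes, of the corresponding one for $\fE$; in particular $\det(f^\ast E)=f^\ast\det(E)$.

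For part (1) I would then invoke two standard facts about line bundles (see \cite{L:RK}): a finite morphism pulls an ample line bundle back to an ample one, and a finite \emph{surjective} morphism reflects ampleness, i.e.\ if the pullback is ample then so was the original. Since finiteness and surjectivity are both preserved under base change, the morphisms produced by the functoriality above are finite (respectively finite surjective) whenever $f$ is. Hence, if $\fE$ is H-ample, all the determinant line bundles of Remark~\ref{rem1.1} for $f^\ast\fE$ are pullbacks by finite maps of ample line bundles, so they are ample and $f^\ast\fE$ is H-ample; and if $f$ is finite surjective and $f^\ast\fE$ is H-ample, the same line bundles for $\fE$ are ample by descent along these finite surjective base changes, so $\fE$ is H-ample.

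For part (2), let $\mathfrak G$ be a rank $t$ quotient Higgs bundle of $\fE$. If $t=r$ the surjection is an isomorphism and there is nothing to prove, so I assume $t<r$. Applying the universal property to $\mathrm{id}_X$, the quotient $\fE\to\mathfrak G$ is classified by a section $\sigma\colon X\to\hgr_t(\fE)$ of $\rho_t$ with $\sigma^\ast\fQ_{t,\fE}\cong\mathfrak G$; being a section of the separated (indeed projective) morphism $\rho_t$, the map $\sigma$ is a closed immersion. Now $\fQ_{t,\fE}$ is H-ample by Definition~\ref{def1.1}, so by Remark~\ref{rem1.1} its associated determinant line bundles are ample. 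Running the functoriality of the first paragraph with $\sigma$ in place of $f$ shows that the iterated Higgs Grassmannians of $\mathfrak G=\sigma^\ast\fQ_{t,\fE}$ are the base changes along the closed immersion $\sigma$ of those of $\fQ_{t,\fE}$, and that the determinant line bundles of Remark~\ref{rem1.1} for $\mathfrak G$ are restrictions to closed subschemes of the ample determinant line bundles of $\fQ_{t,\fE}$. As the restriction of an ample line bundle to a closed subscheme is again ample, all of them are ample and $\mathfrak G$ is H-ample.

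The point where part (2) is not a verbatim special case of part (1) is exactly where I expect the main difficulty: the base $\hgr_t(\fE)$ carrying the Higgs bundle $\fQ_{t,\fE}$ need not be smooth, as the Higgs Grassmannian is cut out as a zero locus and may be singular or non-reduced, so the smoothness hypothesis of part (1) is unavailable and one cannot simply quote it. The careful work is therefore to verify that the functoriality of the iterated Higgs Grassmannian construction and the identification of the relevant determinants as pullbacks remain valid over such a base and for $\sigma$, tracking the $\rho_t^\ast\Omega^1_X$-valued quotient Higgs fields through the successive constructions. Once this is granted, the positivity input is only the elementary fact that ampleness of line bundles passes to closed subschemes, which needs no smoothness assumption.
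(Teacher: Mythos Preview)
Your argument is correct and matches the paper's approach: functoriality of the Higgs Grassmannians plus the standard behavior of line-bundle ampleness under finite (surjective) maps for part~(1), and under closed immersions for part~(2). The only cosmetic difference is that for part~(2) the paper embeds $\hgr_s(\fQ)\hookrightarrow\hgr_s(\fE)$ directly (by composing a rank-$s$ Higgs quotient of $\fQ$ with the surjection $\fE\twoheadrightarrow\fQ$) rather than routing through the classifying section $\sigma\colon X\to\hgr_t(\fE)$ as you do; your explicit attention to the possible non-smoothness of $\hgr_t(\fE)$ is a subtlety the paper's ``by the previous point'' leaves implicit.
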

\begin{proof}
(1) In the rank one case, this is standard. 
In the higher rank case, one first notes that $f^\ast\det(E)=\det(f^\ast E)$, so that the condition on the determinant is fulfilled. By the functoriality of the Higgs Grassmannians, for all $s\in\{1,\dotsc,r-1\}$ the morphism $f$ induces finite morphisms $f_s\colon\hgr_s\left(f^\ast\fE\right)\to\hgr_s(\fE)$  such that $\fQ_{s,f^\ast\fE}=f_s^\ast\fQ_{s,\fE}$. By induction on the ranks of the universal quotients one concludes.

(2) If $\fQ$ is a Higgs quotient of $\fE$, for every $s$ there is a closed embedding $\hgr_s(\fQ) \to \hgr_s(\fE)$ which pullbacks the universal Higgs quotients of $\fE$ to the universal quotients of $\fQ$. Then one concludes by the previous point.
\end{proof}

There is an analogous notion of Higgs numerically effective (H-nef) Higgs bundle \cite{B:GO:1, B:GO:3}. The analogue of Proposition \ref{prop1.1} for H-nef Higgs bundles is as follows.

\begin{proposition}\label{propnef}Let $\fE$ be a Higgs bundle on a smooth projective variety $X$.

(1) Let $f\colon Y\to X$ be a morphism of smooth projective varieties. If $\fE$ is H-nef then $f^\ast\fE$ is H-nef. Moreover, if $f$ is also surjective and $f^\ast\fE$ is H-nef then $\fE$ is H-nef.

(2) If $\fE$ is H-nef then every quotient Higgs bundle of $\fE$ is H-nef.
\end{proposition}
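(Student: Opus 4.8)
The plan is to mirror the proof of Proposition \ref{prop1.1}, replacing the behaviour of ample line bundles under finite morphisms by the corresponding behaviour of nef line bundles under arbitrary, respectively surjective, morphisms. The two standard facts I would invoke are: a nef line bundle pulls back to a nef line bundle under any morphism of projective schemes; and if $f\colon Y\to X$ is surjective and $f^\ast L$ is nef, then $L$ is nef (lift an irreducible curve $C\subset X$ to an irreducible curve $C'\subset Y$ dominating it and compare intersection numbers via the projection formula). It is precisely the validity of pullback for \emph{all} morphisms that lets one drop the finiteness hypothesis present in Proposition \ref{prop1.1}(1).

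For part (1) I would argue by induction on $r=\rank\fE$, understanding H-nefness for Higgs bundles on arbitrary projective schemes so that the induction may pass through the Higgs Grassmannians. The rank-one case is the statement for nef line bundles recalled above. For $r\geq 2$ the determinant condition transfers because $f^\ast\det(E)=\det(f^\ast E)$ is nef under pullback and, when $f$ is surjective, descends along $f$. For the universal quotients I would use the functoriality of the Higgs Grassmannians already employed in the proof of Proposition \ref{prop1.1}: the base-change identity $\hgr_s(f^\ast\fE)=\hgr_s(\fE)\times_X Y$ exhibits $f_s\colon\hgr_s(f^\ast\fE)\to\hgr_s(\fE)$ as the projection of a fibre product and yields $\fQ_{s,f^\ast\fE}=f_s^\ast\fQ_{s,\fE}$. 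Since $\fQ_{s,\fE}$ has rank $s<r$, the inductive hypothesis applies to it along $f_s$: if $\fE$ is H-nef then each $\fQ_{s,\fE}$ is H-nef, whence so is its pullback $\fQ_{s,f^\ast\fE}$, and $f^\ast\fE$ is H-nef; conversely, if $f$ is surjective then $f_s$ is surjective, so H-nefness of $\fQ_{s,f^\ast\fE}=f_s^\ast\fQ_{s,\fE}$ forces that of $\fQ_{s,\fE}$, and together with the descent of $\det(E)$ this gives that $\fE$ is H-nef.

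For part (2) I would read off the result directly from the universal property recalled before Definition \ref{def1.1}. A rank-$q$ Higgs quotient $\fQ$ of $\fE$ is a rank-$q$ quotient of $E$ on which $\phi$ induces a Higgs field, and therefore corresponds to a section $\sigma\colon X\to\hgr_q(\fE)$ with $\sigma^\ast\fQ_{q,\fE}=\fQ$. If $\fE$ is H-nef then $\fQ_{q,\fE}$ is H-nef by definition, and applying the pullback half of part (1) to the morphism $\sigma$ shows that $\fQ=\sigma^\ast\fQ_{q,\fE}$ is H-nef; note this route delivers for free the nefness of $\det(Q)=\sigma^\ast\det(\fQ_{q,\fE})$, which does not follow from the quotient structure on its own.

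The step I expect to be the main obstacle is the descent direction of part (1), where the finiteness of $f_s$ available in the ample case is no longer present and must be replaced by surjectivity of $f_s$. This rests on the scheme-level base-change identity $\hgr_s(f^\ast\fE)=\hgr_s(\fE)\times_X Y$, which follows from the description of the Higgs Grassmannian as the zero locus of a canonical morphism compatible with pullback; granting it, $f_s$ is literally a projection of the fibre product and hence surjective whenever $f$ is. A minor point to keep in mind throughout is that the Higgs Grassmannians through which the induction runs need not be smooth, but this is harmless since nefness and its pullback and descent properties are numerical conditions that make sense on any projective scheme.
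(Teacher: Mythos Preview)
The paper does not give an explicit proof of Proposition~\ref{propnef}; it simply states the result as ``the analogue of Proposition~\ref{prop1.1} for H-nef Higgs bundles,'' leaving the reader to adapt that proof. Your proposal carries out precisely this adaptation, correctly replacing the ample/finite input by the nef/arbitrary (respectively surjective) input, and identifying the base-change identity $\hgr_s(f^\ast\fE)\cong\hgr_s(\fE)\times_X Y$ as the reason $f_s$ is surjective when $f$ is. This is the intended argument, and your proof is correct.

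One minor difference worth noting: for part~(2) the paper's proof of Proposition~\ref{prop1.1} uses the closed embedding $\hgr_s(\fQ)\hookrightarrow\hgr_s(\fE)$ and then invokes part~(1), whereas you instead realise $\fQ$ directly as $\sigma^\ast\fQ_{q,\fE}$ for the classifying section $\sigma\colon X\to\hgr_q(\fE)$ and pull back along $\sigma$. Both routes work; yours is slightly more direct and, as you observe, handles the determinant condition automatically.
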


\bigskip
\section{Criteria for H-ampleness and H-nefness}\label{3}
$X$ will be a smooth projective variety of dimension $n$ over an algebraically closed field $\Bbb K$ of characteristic 0.  We recall that every Higgs sheaf $\fE$ has a Harder-Narasimhan filtration
\begin{equation}\label{HN} 0 = \fE_{0}\subset \fE_1 \subset \dots \subset \fE_{m-1} \subset \fE_m = \fE \end{equation}
and the slopes of the successive quotients $\fQ_i=\fE_i/\fE_{i-1}$ satisfy the condition
$$ \mu_{\mbox{\tiny max}}(\fE) := \mu(\fQ_1) > \dots > \mu(\fQ_m) := \mumin (\fE).$$
For every Higgs quotient $\fQ$ of $\fE$ one has $\mu(\fQ) \ge  \mumin (\fE)$.

Whenever we consider a morphism $f \colon C \to X$, we understand that $C$ is a smooth projective connected curve.

\begin{theorem} \label{critmumin}Let $\fE=(E,\phi)$ be a rank $r$ Higgs bundle on $X$. Fix an ample class $h\in N^1(X)$. 
Then $\fE$ is H-ample if and only if
\begin{enumerate}  \item the line bundle $\det (E)$ is ample;
\item there exists $\delta\in \Rplus$ such that for every finite morphism $f\colon C\to X$,
the inequality
\begin{equation}\label{mumin}
\mumin (f^\ast \fE) \ge \delta \int_Cf^\ast h
\end{equation}
holds.
\end{enumerate}
\end{theorem}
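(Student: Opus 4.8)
The plan is to prove the two implications separately, organizing both around the universal property of the Higgs Grassmannians and the dictionary between slopes on curves and degrees of universal-quotient determinants. The technical engine I will isolate first is the following positivity lemma: if $\pi\colon G\to X$ is a projective morphism, $L$ is a $\pi$-ample line bundle on $G$, $h$ is ample on $X$, and $L-\delta\,\pi^\ast h$ is nef for some $\delta\in\Rplus$, then $L$ is ample. This is clean to prove: $A:=L+N\pi^\ast h$ is ample for $N\gg0$ (relatively ample plus a large multiple of the pullback of an ample class), while $M:=L-\delta\,\pi^\ast h$ is nef, and choosing $t=\delta/(N+\delta)\in(0,1)$ one has $L=tA+(1-t)M$, a sum of an ample and a nef $\R$-class, hence ample.

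For the forward direction I note that condition (1) is part of Definition \ref{def1.1}. For (2), given a finite $f\colon C\to X$ the slope $\mumin(f^\ast\fE)$ is realized by the last Harder--Narasimhan quotient $\fQ$, which is a Higgs quotient of $f^\ast\fE$ of some rank $s$. If $s=r$ then $\mumin(f^\ast\fE)=\tfrac1r\deg f^\ast\det(E)$; if $s<r$ the universal property produces a lift $\tilde f\colon C\to\hgr_s(\fE)$ with $\fQ=\tilde f^\ast\fQ_{s,\fE}$, so $\mumin(f^\ast\fE)=\tfrac1s\deg\tilde f^\ast\det(\fQ_{s,\fE})$. By H-ampleness each $\det(\fQ_{s,\fE})$ and $\det(E)$ is ample; since the ample cone is open, $\det(\fQ_{s,\fE})-\delta_s\,\rho_s^\ast h$ (respectively $\det(E)-\delta_r h$) is nef for suitable $\delta_s\in\Rplus$. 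Taking $\delta=\min_s\delta_s/s$ and pulling back along $\tilde f$ yields the uniform bound \eqref{mumin}.

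For the reverse direction I induct on $r$. The case $r=1$ is Kleiman's criterion: (2) says $\deg f^\ast E\ge\delta\int_Cf^\ast h$ for all finite $f$, and since every integral curve in $X$ is the image of its normalization, $E-\delta h$ is nef, so $E$ is ample. For $r\ge2$ I use Remark \ref{rem1.1}: it suffices to show that $\det(E)$ and every determinant $\det(\fQ_{(s_1,\dots,s_k),\fE})$ of an iterated universal Higgs quotient is ample. Fix such a determinant $L$ on the iterated Higgs Grassmannian $G$, with projection $\pi\colon G\to X$. For any curve $g\colon C\to G$ the universal property identifies $g^\ast L$ with the determinant of a Higgs quotient of $f^\ast\fE$, where $f=\pi\circ g$; its degree is at least $s_1\,\mumin(f^\ast\fE)$. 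When $f$ is finite, (2) bounds this below by $s_1\delta\int_Cf^\ast h$; when $g$ factors through a fibre of $\pi$ the relevant universal quotient is a quotient of a trivial bundle, hence globally generated, so $g^\ast L$ has non-negative degree. Thus $L-s_1\delta\,\pi^\ast h$ is nef, and feeding this together with the Pl\"ucker (relative) ampleness into the lemma gives $L$ ample.

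I expect the genuine obstacle to be precisely the last step of the reverse direction: controlling the behaviour in the \emph{relative} (fibre) directions of the iterated Higgs Grassmannians. Here (2) is silent, the universal quotient itself need not be ample, and an individual determinant $\det(\fQ_{(s_1,\dots,s_k),\fE})$ is only relatively ample over the \emph{next} level of the tower rather than over $X$ directly; moreover $\hgr_s(\fE)$ is in general singular, so the induction cannot be run by naively applying the theorem to $\fQ_{s,\fE}$. The heart of the argument is therefore to glue the horizontal estimate coming from (2) with the vertical positivity coming from relative ampleness, and to propagate this simultaneously up the whole tower so that each determinant line bundle is seen to be ample absolutely; managing this ascent uniformly is where the real work lies.
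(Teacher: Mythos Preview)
Your overall architecture coincides with the paper's: both directions are run through the universal property of the Higgs Grassmannians and the identification of $\mumin(f^\ast\fE)$ with the slope of the last Harder--Narasimhan quotient. In the forward direction your case split ``$s=r$ versus $s<r$'' is exactly the paper's ``$f^\ast\fE$ semistable versus not'', and the use of the openness of the ample cone to produce the constants $\delta_s$ (and then $\delta=\min_s\delta_s/s$) is precisely what the paper does with its constant $\eta$.

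For the reverse direction the paper is much terser than you. It simply takes a finite $g\colon C\to S$ into the iterated Higgs Grassmannian, sets $f=\pi_S\circ g$, obtains a rank-one Higgs quotient $g^\ast L_S$ of $f^\ast\fE$, writes
\[
\deg g^\ast L_S \;\ge\; \mumin(f^\ast\fE)\;\ge\;\delta\int_C f^\ast h,
\]
and concludes ``so that $L_S$ is ample''. It does not separate out the case where $f$ is constant, nor does it invoke any relative-ampleness input; in particular it does not isolate the positivity lemma you prove, and it does not discuss how one passes from a bound against $\pi_S^\ast h$ (which is only nef on $S$) to genuine ampleness of $L_S$.

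So the obstacle you flag in your last paragraph --- that $\det\fQ_{(s_1,\dots,s_k),\fE}$ is a priori only relatively ample over the \emph{next} floor of the tower rather than over $X$, and that one must somehow combine the horizontal estimate from (2) with vertical Pl\"ucker positivity all the way up --- is real, and the paper's proof does not supply an argument for it beyond the displayed inequality. Your treatment of the fibre case (quotient of a trivial bundle, hence globally generated) and your positivity lemma already go further than the paper at this point. In short: your proposal is not missing an idea that the paper provides; the step you are worried about is precisely the step the paper passes over.
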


\begin{proof}
Let us assume that $\det (E)$ is ample and condition \eqref{mumin} holds. The H-ampleness of $\fE$ is equivalent to the ampleness of a collection of line bundles $L_S$, each on an iterated Higgs Grassmannian, which we denote generically by $S$, with projection $\pi_S \colon S \to X$ (these line bundles are obtained by successively taking the universal Higgs quotient until one reaches the rank one quotient bundles, see Remark \ref{rem1.1}). Let $q_S \colon \pi_S^\ast \fE \to L_S$ be the quotient morphism, let $g \colon C \to S$ be a finite morphism, and let $f = \pi_S \circ g$. We have a Higgs quotient $f^\ast \fE \to \fQ$, where $\fQ=g^\ast L_S$. By the properties of the Harder-Narasimhan filtration we have $$\deg g^\ast L_S = \deg \fQ \ge \mumin (f^\ast \fE) \ge \delta \int_Cf^\ast h ,$$so that $L_S$ is ample for every choice of $S$. As a consequence, $\fE$ is H-ample.

To prove the opposite implication, note that since $\fE$ is H-ample, the determinant $\det (E)$ is ample, and the class $c_1(E)$ is ample as well. Let $f\colon C \to X$ be a morphism. We have two cases, according to whether the Higgs bundle $f^\ast \fE$ is semistable or not. Let us start with the first case. By the Kleiman criterion (Corollary 1.4.11 of \cite{L:RK}), there exists $\epsilon \in \Rplus$ such that for every irreducible curve $\bar C$ in $X$ one has $$\frac{c_1(E)\cdot\bar C}{h \cdot \bar C}\ge \epsilon.$$Then $$\mumin (f^\ast \fE) = \mu (f^\ast \fE) = \frac1r\int_C f^\ast c_1(E)= \frac1r c_1(E) \cdot \bar C \ge \frac{\epsilon}{r} h \cdot \bar C= \delta \int_C f^\ast h,$$where $\bar C$ is the image of $C$ in $X$, and $\delta= \epsilon/r$.

In the second case, let \begin{displaymath}
0\subset\fE_1\subset\dotsc\subset\fE_{m-1}\subset\fE_m=f^\ast \fE
\end{displaymath}be the Harder-Narasimhan filtration of $f^\ast \fE$. (Note the change of notation with respect to eq.~\eqref{HN}). By the universality and functoriality  of the Higgs Grassmannian there is a lift $f_s \colon C \to \hgr_s(\fE)$ of $f$ such that $\fE_m/\fE_{m-1}= f_s^\ast \fQ_{s,\fE}$
. Therefore, $$\mumin (f^\ast \fE) = \frac 1s \int_C f_s^\ast (c_1(\fQ_{s, \fE})).$$Since $\fQ_{s, \fE}$ is H-ample, there exists $\eta \in \Rplus$ such that the class $$c_1(\fQ_{s, \fE})-\eta \rho_s^\ast h$$is ample for all possible values of $s$, so that $$ \frac 1s \int_C f_s^\ast (c_1(\fQ_{s, \fE})) \ge \delta \int_C f^\ast h,$$where $\delta=\eta/(r-1)$, thus proving the claim.
\end{proof}

\begin{corollary}[Barton-Kleiman-type criterion for H-ampleness]\label{critquots}
Let $\fE=(E,\phi)$ be a rank $r$ Higgs bundle on $X$. Fix an ample class $h\in N^1(X)$. 
Then $\fE$ is H-ample if and only if
\begin{enumerate}  \item the line bundle $\det (E)$ is ample;
\item there exists $\delta\in \Rplus$ such that for every finite morphism $f\colon C\to X$, and for every Higgs quotient $\fQ$ of $f^\ast \fE$, the inequality
\begin{equation}\label{quots}
\deg (\fQ) \ge \delta \int_Cf^\ast h
\end{equation}
holds.
\end{enumerate}
\end{corollary}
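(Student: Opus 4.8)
The plan is to deduce this directly from Theorem~\ref{critmumin}: since both statements share hypothesis (1) that $\det(E)$ is ample, it suffices to prove that, given (1), the quotient inequality~\eqref{quots} is equivalent to the $\mumin$-inequality~\eqref{mumin}. Theorem~\ref{critmumin} then identifies either condition with H-ampleness. Throughout I will use that for a finite morphism $f\colon C\to X$ one has $\int_C f^\ast h>0$, because $h$ is ample and the image of $C$ is a curve; this strict positivity is what allows one to move between the two formulations.

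First I would show that~\eqref{mumin} implies~\eqref{quots}. Suppose there is $\delta\in\Rplus$ with $\mumin(f^\ast\fE)\ge \delta\int_C f^\ast h$ for every finite $f$. Since $\int_C f^\ast h>0$, this forces $\mumin(f^\ast\fE)>0$. For any Higgs quotient $\fQ$ of $f^\ast\fE$, the inequality $\mu(\fQ)\ge\mumin$ recorded before Theorem~\ref{critmumin} gives
\[
\deg(\fQ)=\rank(\fQ)\,\mu(\fQ)\ge \rank(\fQ)\,\mumin(f^\ast\fE)\ge \mumin(f^\ast\fE)\ge \delta\int_C f^\ast h,
\]
the middle inequality using $\rank(\fQ)\ge1$ together with $\mumin(f^\ast\fE)>0$. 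This is exactly~\eqref{quots}, with the same constant $\delta$.

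Conversely, I would recover~\eqref{mumin} by testing~\eqref{quots} on the final quotient of the Harder--Narasimhan filtration. Let $0\subset\fE_1\subset\dots\subset\fE_m=f^\ast\fE$ be the HN filtration of $f^\ast\fE$; the quotient $\fE_m/\fE_{m-1}$ is a genuine Higgs quotient of $f^\ast\fE$ whose slope is precisely $\mumin(f^\ast\fE)$. Applying~\eqref{quots} to it yields $\rank(\fE_m/\fE_{m-1})\,\mumin(f^\ast\fE)\ge\delta\int_C f^\ast h$, and since that rank is at most $r$ while $\int_C f^\ast h>0$, we obtain $\mumin(f^\ast\fE)\ge(\delta/r)\int_C f^\ast h$. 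Hence~\eqref{mumin} holds with constant $\delta/r$, and Theorem~\ref{critmumin} delivers H-ampleness.

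The argument is short, and the only point requiring care---the ``main obstacle''---is the bookkeeping with the ranks and the sign of $\mumin$: the constant must be replaced by $\delta/r$ in the second direction, while the reduction from $\rank(\fQ)\,\mumin$ down to $\mumin$ in the first direction is legitimate only because $\mumin$ has already been shown to be positive. No revisiting of the Higgs Grassmannian constructions is needed; once the positivity $\int_C f^\ast h>0$ is secured, everything reduces to elementary slope inequalities.
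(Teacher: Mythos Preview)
Your proof is correct and follows essentially the same route as the paper's: both directions are deduced from Theorem~\ref{critmumin} via the chain $\deg(\fQ)=s\,\mu(\fQ)\ge s\,\mumin(f^\ast\fE)\ge s\,\delta\int_C f^\ast h\ge \delta\int_C f^\ast h$ in one direction, and by testing on the last Harder--Narasimhan quotient (losing a factor of $r$) in the other. Your explicit remark that one needs $\mumin(f^\ast\fE)>0$ to pass from $s\,\mumin$ to $\mumin$ is a welcome clarification; the paper absorbs this into the positivity of $\int_C f^\ast h$.
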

\begin{proof}
If $\fE$ is H-ample, by Theorem \ref{critmumin} there exists $\delta \in \Rplus$ such that $$\deg (\fQ) = s \mu(\fQ) \ge s \mumin(f^\ast \fE) \ge s \delta \int_C f^\ast h \ge \delta \int_Cf^\ast h$$(here $s= \rk(\fQ)$.)

Conversely, let us assume that $\fE$ satisfies the conditions (1) and (2); let us call $\epsilon$ the constant. In particular, we can take for $\fQ$ the quotient $\fE_m/\fE_{m-1}$ of the Harder-Narasimhan filtration of $f^\ast E$, so that $$\mumin (f^\ast \fE) = \frac{\deg(\fE_m/\fE_{m-1})}{s}\ge \frac\epsilon s\int_C f^\ast h \ge \delta \int_Cf^\ast h$$where $\delta=\epsilon/r$. Therefore, $\fE$ is H-ample.
\end{proof}

\begin{example}
Let $X$ be a smooth projective irreducible curve of genus $g$, and $L_1$, $L_2$ line bundles on $X$, of degrees $d_1$ and $d_2$, respectively. Let $E = L_1 \oplus L_2$, and assuming that $d_1 \le 2 g -2+d_2$, equip it with a nonzero, nilpotent Higgs field given by $$\phi \colon L_1 \to L_2 \otimes \Omega_X, \qquad \phi(L_2)=0$$ where $\phi\colon L_1 \to L_2 \otimes \Omega_X$ is any nonzero morphism.  The Higgs line bundle $\mathcal L_1=(L_1,0)$ is a Higgs quotient of
 $\fE=(E,\phi)$. Let $d_1 + d_2 > 0$ so that $\det (E)$ is ample. Then $\fE$ is H-ample if and only if $\deg L_1=d_1 > 0$. 
 
 If $\deg L_1> 0$, fix an ample divisor $h$ on $X$; then 
\begin{equation}\deg L_1 \ge \delta \deg h \label{onX} \end{equation} choosing $0< \delta \le \frac{d_1}{\deg h}$. 
Now if $f\colon C \to X$ is a finite morphism,  
according to the analysis in \cite{B:HR}, Section 3.3,
$f^\ast \fE$ has at most two locally free rank 1 Higgs quotients, i.e.,
the pullback $Q_1=f^\ast L_1$ with zero Higgs field, and possibly another quotient 
with underlying bundle $Q_2$ with  $\deg Q_2\ge \deg Q_1$.
Now 
\eqref{onX} implies
$$ \deg Q_2 \ge \deg Q_1 \ge \delta \int_C f^\ast h$$
so that $\fE$ is H-ample.

On the other hand, if $\fE$ is H-ample, then $L_1$ is ample, i.e., $d_1>0$, by Proposition \ref{prop1.1}  (2).
 
If $g\ge 3$ one can choose $d_2<0$ so that  $E$ is not ample as an ordinary bundle (for instance, with $g=3$ take $d_1=2$ and $d_2=-1)$. 
\end{example}


The versions of the two previous criteria for H-nef Higgs bundles are as follows. The first one was proved in \cite{B:B:G}, Lemma 3.3.

\begin{theorem} \label{thm:Hnefcrit}
Let $\fE$ be a rank $r$ Higgs bundle on $X$. Then $\fE$ is H-nef if and only if $\mumin(f^\ast \fE) \ge 0$ for all morphisms  $f \colon C \to X$.
\end{theorem}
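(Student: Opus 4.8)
The plan is to mirror the proof of Theorem~\ref{critmumin}, replacing ``ample'' by ``nef'' throughout and dropping the auxiliary constant $\delta$. First I would record that, just as in the ample case (and by the nef analogue of Remark~\ref{rem1.1}), the H-nefness of $\fE$ is equivalent to the nefness of a collection of tautological line bundles $L_S$, each living on an iterated Higgs Grassmannian $S$ with structure morphism $\pi_S\colon S\to X$ and obtained by successively taking universal Higgs quotients down to rank one; each $L_S$ comes with a rank one Higgs quotient map $\pi_S^\ast\fE\to L_S$. I would also note that a morphism from a curve to $X$ is either constant, in which case $f^\ast\fE$ is trivial and $\mumin(f^\ast\fE)=0$, or finite onto its image; thus the stated hypothesis is really a statement about all the relevant pullbacks, and I am free to invoke Kleiman's theorem (see \cite{L:RK}) in the form: a line bundle $L$ on a projective variety is nef if and only if $\deg g^\ast L\ge 0$ for every morphism $g\colon C\to S$.

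For the ``if'' direction, I would fix one of the line bundles $L_S$ together with an arbitrary morphism $g\colon C\to S$ and set $f=\pi_S\circ g\colon C\to X$. Then $\fQ=g^\ast L_S$ is a rank one Higgs quotient of $f^\ast\fE$, so the Harder--Narasimhan property recalled above (every Higgs quotient has slope $\ge\mumin$) gives $\deg g^\ast L_S=\mu(\fQ)\ge\mumin(f^\ast\fE)\ge 0$, the final inequality being the hypothesis (which also covers the constant case, where $\mumin(f^\ast\fE)=0$). Since $g$ is arbitrary, Kleiman's criterion makes $L_S$ nef, and as this holds for every $S$ we conclude that $\fE$ is H-nef.

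For the ``only if'' direction, assume $\fE$ is H-nef and let $f\colon C\to X$ be a morphism, which I may take non-constant. Taking the Harder--Narasimhan filtration $0\subset\fE_1\subset\dots\subset\fE_m=f^\ast\fE$, the minimal quotient $\fQ_m=\fE_m/\fE_{m-1}$ is torsion-free, hence locally free on the smooth curve $C$, of some rank $s$, with $\mu(\fQ_m)=\mumin(f^\ast\fE)$. Exactly as in the proof of Theorem~\ref{critmumin}, the universal property and functoriality of the Higgs Grassmannian produce a lift $f_s\colon C\to\hgr_s(\fE)$ with $\fQ_m=f_s^\ast\fQ_{s,\fE}$. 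Now $\fQ_{s,\fE}$ is H-nef because $\fE$ is, so its determinant is nef, and therefore $\det\fQ_m=f_s^\ast\det\fQ_{s,\fE}$ is nef as the pullback of a nef line bundle; a nef line bundle on a curve has nonnegative degree, whence $\mumin(f^\ast\fE)=\tfrac1s\deg\det\fQ_m\ge 0$. The only step that genuinely differs from the ample case, and the one I expect to be the crux, is this last extraction of positivity: here it is not an ample determinant but the nefness forced on the determinants of the universal Higgs quotients that yields $\mumin(f^\ast\fE)\ge 0$, which is exactly why the criterion reads as the bare inequality $\mumin(f^\ast\fE)\ge 0$, with no separate $\delta$-weighted determinant condition and quantified over all morphisms rather than only finite ones.
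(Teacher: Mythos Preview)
Your strategy is correct and is exactly the adaptation of the proof of Theorem~\ref{critmumin} that the paper intends; indeed the paper does not reproduce the argument but simply cites \cite{B:B:G}, where this is Lemma~3.3. There is, however, one small gap in your ``if'' direction, and it is precisely the single observation the paper chooses to record. You reduce H-nefness to the nefness of the tautological rank one Higgs quotients $L_S$ on the iterated Higgs Grassmannians, but the definition of H-nef, like Definition~\ref{def1.1} for H-ample, also separately requires that $\det(E)$ be nef, and $\det(E)$ is not one of the $L_S$ (there is no rank one Higgs quotient $\fE\to\det(E)$ in general). In the ample theorem this is hypothesis~(1); here it must be derived from the $\mumin$ condition, which is why the statement carries no separate determinant clause. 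The missing line is
\[
\deg f^\ast\det(E)=r\,\mu(f^\ast\fE)\ge r\,\mumin(f^\ast\fE)\ge 0
\]
for every $f\colon C\to X$, so $\det(E)$ is nef. With this inserted, your argument is complete and coincides with the paper's intended proof.
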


\begin{remark} This theorem can be equivalently stated by considering all morphisms  $f \colon C \to X$, or only finite ones.
\end{remark}

\begin{proof}
A proof is given in \cite{B:B:G}. Here we only note that when $\mumin(f^\ast \fE) \ge 0$ the line bundle $\det (E)$ is nef since $$\deg f^\ast \det(E) = r \mu(f^\ast E) \ge r \mumin(f^\ast \fE) \ge 0$$ for every morphism $f \colon C \to X$.
\end{proof}

\begin{corollary}
Let $\fE$ be a rank $r$ Higgs bundle on $X$. Then $\fE$ is H-nef if and only if 
\begin{enumerate}  \item the line bundle $\det (E)$ is nef;
\item for every morphism $f\colon C\to X$, and every Higgs quotient $\fQ$ of $f^\ast \fE$, the inequality $
\deg (\fQ) \ge 0$ holds.
\end{enumerate}
\end{corollary}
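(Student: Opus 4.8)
The plan is to mirror the proof of Corollary~\ref{critquots}, replacing the quantitative criterion of Theorem~\ref{critmumin} by the qualitative one of Theorem~\ref{thm:Hnefcrit}; morally this is the ``$\delta=0$'' version of the H-ample statement, so neither the ample class $h$ nor a positive constant $\delta$ enters.

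For the forward implication I would assume $\fE$ is H-nef and invoke Theorem~\ref{thm:Hnefcrit} to obtain $\mumin(f^\ast\fE)\ge 0$ for every $f\colon C\to X$. Condition~(1) is then immediate: the computation already recorded in the proof of Theorem~\ref{thm:Hnefcrit}, namely $\deg f^\ast\det(E)=r\,\mu(f^\ast E)\ge r\,\mumin(f^\ast\fE)\ge 0$, shows that $\det(E)$ is nef. For condition~(2), let $\fQ$ be any Higgs quotient of $f^\ast\fE$, of rank $s$. By the Harder--Narasimhan property recalled at the start of Section~\ref{3} one has $\mu(\fQ)\ge\mumin(f^\ast\fE)\ge 0$, hence $\deg(\fQ)=s\,\mu(\fQ)\ge 0$.

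For the converse I would assume (1) and (2) and check the single hypothesis of Theorem~\ref{thm:Hnefcrit}. Fix $f\colon C\to X$ and take the last step $\fQ=\fE_m/\fE_{m-1}$ of the Harder--Narasimhan filtration of $f^\ast\fE$; this is a genuine Higgs quotient of $f^\ast\fE$, and by construction $\mu(\fQ)=\mumin(f^\ast\fE)$. Applying hypothesis~(2) to this $\fQ$ gives $\deg(\fQ)\ge 0$, whence $\mumin(f^\ast\fE)=\deg(\fQ)/\rk(\fQ)\ge 0$. Since $f$ was arbitrary, Theorem~\ref{thm:Hnefcrit} yields that $\fE$ is H-nef.

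The argument presents no serious obstacle; the only points requiring care are bookkeeping ones. First, one must make sure that the minimal-slope term $\fE_m/\fE_{m-1}$ of the Harder--Narasimhan filtration is itself a Higgs \emph{quotient} (not merely a subquotient) of $f^\ast\fE$, so that hypothesis~(2) applies to it — but this is exactly the last quotient of the filtration, hence legitimate. Second, it is worth observing that hypothesis~(1) is in fact not needed in the converse: the $\mumin$ estimate already forces it. It is stated only to make the biconditional symmetric and to record that nefness of $\det(E)$ is a genuine consequence of H-nefness.
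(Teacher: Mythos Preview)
Your proof is correct and is exactly the argument the paper intends: the corollary is stated without proof, immediately after Theorem~\ref{thm:Hnefcrit}, as the H-nef analogue of Corollary~\ref{critquots}, and your write-up mirrors that proof with $\delta=0$. Your closing remark that hypothesis~(1) is redundant in the converse direction is also correct and consistent with the observation the paper makes in the proof of Theorem~\ref{thm:Hnefcrit}.
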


We show that that the notion of H-ampleness is well-behaved with respect to tensor products and extensions.

\begin{theorem}\label{ampletensors}
If $\fE$ and $\fF$ are H-ample Higgs bundles, their tensor product $\fE \otimes \fF$ is H-ample as well.
\end{theorem}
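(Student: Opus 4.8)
The plan is to apply the $\mumin$-criterion of Theorem \ref{critmumin} to $\fE\otimes\fF$. Accordingly I must verify two things: that $\det(\fE\otimes\fF)$ is ample, and that there is a constant $\delta\in\Rplus$ with $\mumin\!\left(f^\ast(\fE\otimes\fF)\right)\ge\delta\int_C f^\ast h$ for every finite morphism $f\colon C\to X$.

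The determinant condition is immediate. Writing $a=\rank\fE$ and $b=\rank\fF$, one has $\det(\fE\otimes\fF)=\det(E)^{\otimes b}\otimes\det(F)^{\otimes a}$; since $\fE$ and $\fF$ are H-ample, both $\det(E)$ and $\det(F)$ are ample, and a tensor product of positive powers of ample line bundles is ample. Hence condition (1) of Theorem \ref{critmumin} holds.

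For the second condition, fix a finite morphism $f\colon C\to X$ and use that $f^\ast(\fE\otimes\fF)=f^\ast\fE\otimes f^\ast\fF$ as Higgs bundles on the curve $C$. The crux is the additivity formula
\[
\mumin\!\left(f^\ast\fE\otimes f^\ast\fF\right)=\mumin(f^\ast\fE)+\mumin(f^\ast\fF).
\]
Granting this, let $\delta_1,\delta_2\in\Rplus$ be the constants supplied by Theorem \ref{critmumin} for the H-ample bundles $\fE$ and $\fF$; then $\mumin(f^\ast\fE)\ge\delta_1\int_C f^\ast h$ and $\mumin(f^\ast\fF)\ge\delta_2\int_C f^\ast h$, whence $\mumin\!\left(f^\ast(\fE\otimes\fF)\right)\ge(\delta_1+\delta_2)\int_C f^\ast h$, and $\delta=\delta_1+\delta_2$ completes the verification.

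The main obstacle is the additivity of $\mumin$, which I expect to be the only nontrivial input. It rests on the Higgs analogue, valid in characteristic $0$, of the classical Ramanan--Ramanathan theorem that a tensor product of semistable sheaves is semistable: on the curve $C$ the tensor product of two semistable Higgs bundles is again semistable. Given this, one obtains $\mu_{\max}(\mathcal A\otimes\mathcal B)=\mu_{\max}(\mathcal A)+\mu_{\max}(\mathcal B)$ for $\mathcal A=f^\ast\fE$, $\mathcal B=f^\ast\fF$ by tensoring the top Harder--Narasimhan pieces of $\mathcal A$ and $\mathcal B$ to exhibit a semistable Higgs subsheaf of that slope, and by bounding above through the filtration of $\mathcal A\otimes\mathcal B$ by the $\mathcal A_i\otimes\mathcal B_j$, whose graded pieces are semistable of slope at most $\mu_{\max}(\mathcal A)+\mu_{\max}(\mathcal B)$. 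Dualizing --- the dual of a Higgs bundle is again a Higgs bundle and $(\mathcal A\otimes\mathcal B)^\ast=\mathcal A^\ast\otimes\mathcal B^\ast$ --- turns this into the stated additivity of $\mumin$, and the remaining slope bookkeeping on $C$ is routine.
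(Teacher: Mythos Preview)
Your proof is correct and follows essentially the same route as the paper: apply Theorem \ref{critmumin}, use the additivity $\mumin(f^\ast\fE\otimes f^\ast\fF)=\mumin(f^\ast\fE)+\mumin(f^\ast\fF)$, and set $\delta=\delta_1+\delta_2$. You are in fact more thorough than the paper, which omits the determinant check and uses the additivity formula without comment; your justification via semistability of tensor products in characteristic $0$ and dualization from $\mu_{\max}$ is the expected one.
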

\begin{proof}
Fix an ample class $h \in N^1(X) $. By Theorem \ref{critmumin} there exists $\delta_1$, $\delta_2$ such that for every morphism $f \colon C \to X$, the inequalities $$\mumin (f^\ast \fE) \ge \delta_1 \int_Cf^\ast h, \qquad \mumin (f^\ast \fF) \ge \delta_2 \int_Cf^\ast h$$ hold. Letting $\delta=\delta_1 + \delta_2$ we obtain $$\mumin (f^\ast (\fE \otimes \fF))= \mumin( f^\ast \fE) + \mumin( f^\ast \fF) \ge \delta \int_Cf^\ast h.$$
\end{proof}

\begin{theorem}\label{ampleext}
Let $0\to\fE_1\to\fE\to\fE_2\to0$ be a short exact sequence of Higgs bundles over $X$, where $\fE_1=(E_1,\phi_1)$ and $\fE_2=(E_2,\phi_2)$ are H-ample. Then $\fE$ is H-ample.
\end{theorem}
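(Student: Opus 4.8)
The plan is to derive H-ampleness of $\fE$ from the numerical criterion of Theorem \ref{critmumin}, exactly in the spirit of the tensor-product case (Theorem \ref{ampletensors}). Fix an ample class $h \in N^1(X)$. The determinant condition is immediate: from the exact sequence one has $\det E = \det E_1 \otimes \det E_2$, and since $\fE_1$, $\fE_2$ are H-ample both $\det E_1$ and $\det E_2$ are ample, whence so is their tensor product. It remains to produce a single $\delta \in \Rplus$ controlling $\mumin(f^\ast \fE)$ for every finite morphism $f \colon C \to X$.

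The key point, which I expect to be the main obstacle, is the elementary but crucial estimate
\[
\mumin(\mathfrak B) \ge \min\bigl(\mumin(\mathfrak A),\, \mumin(\mathfrak C)\bigr)
\]
valid for any short exact sequence of Higgs bundles $0 \to \mathfrak A \to \mathfrak B \to \mathfrak C \to 0$ on a curve. To prove it I would take a Higgs quotient $q \colon \mathfrak B \to \fQ$ realizing $\mu(\fQ) = \mumin(\mathfrak B)$ (the last Harder--Narasimhan quotient), let $\fQ'$ be the image of $\mathfrak A$ under $q$, which is a Higgs quotient of $\mathfrak A$, and set $\fQ'' = \fQ/\fQ'$, a Higgs quotient of $\mathfrak B/\mathfrak A = \mathfrak C$. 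Since $\mu(\fQ') \ge \mumin(\mathfrak A)$ and $\mu(\fQ'') \ge \mumin(\mathfrak C)$ by the quotient property of the Harder--Narasimhan filtration recalled above, and since $\mu(\fQ)$ is a convex combination of $\mu(\fQ')$ and $\mu(\fQ'')$, one gets $\mu(\fQ) \ge \min(\mu(\fQ'), \mu(\fQ''))$; the degenerate cases $\fQ' = 0$ and $\fQ' = \fQ$ are handled directly and give the same bound.

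With this in hand the argument closes quickly. For a finite morphism $f \colon C \to X$, pulling back the given sequence yields the exact sequence $0 \to f^\ast \fE_1 \to f^\ast \fE \to f^\ast \fE_2 \to 0$ of Higgs bundles on $C$, so the estimate above gives $\mumin(f^\ast \fE) \ge \min\bigl(\mumin(f^\ast \fE_1),\, \mumin(f^\ast \fE_2)\bigr)$. By Theorem \ref{critmumin} applied to the H-ample bundles $\fE_1$ and $\fE_2$, there exist $\delta_1, \delta_2 \in \Rplus$ with $\mumin(f^\ast \fE_i) \ge \delta_i \int_C f^\ast h$ for all finite $f$; putting $\delta = \min(\delta_1, \delta_2)$ and noting $\int_C f^\ast h > 0$ gives $\mumin(f^\ast \fE) \ge \delta \int_C f^\ast h$. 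Both hypotheses of Theorem \ref{critmumin} are thus verified and $\fE$ is H-ample. The only genuinely delicate points are checking that $\fQ'$ and $\fQ''$ are honest $\phi$-invariant quotients, so that their slopes compare to the respective $\mumin$, and treating the rank-zero degenerate cases; neither presents real difficulty.
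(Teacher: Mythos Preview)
Your proof is correct and follows essentially the same route as the paper's. The paper verifies Corollary~\ref{critquots} (the degree criterion) directly: for an arbitrary Higgs quotient $\fQ$ of $f^\ast\fE$ it forms the induced exact sequence $0\to\fQ_1\to\fQ\to\fQ_2\to0$ with $\fQ_i$ a Higgs quotient of $f^\ast\fE_i$ (taking $\fQ_2$ modulo torsion), and bounds $\deg\fQ=\deg\fQ_1+\deg\fQ_2$ using the $\delta_i$ supplied by Corollary~\ref{critquots} for $\fE_i$. You instead package the same diagram-chase as the lemma $\mumin(\mathfrak B)\ge\min(\mumin(\mathfrak A),\mumin(\mathfrak C))$ and then invoke Theorem~\ref{critmumin}; your $\fQ$ is just the last Harder--Narasimhan quotient, and your $\fQ',\fQ''$ are exactly the paper's $\fQ_1,\fQ_2$. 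The torsion issue you allude to for $\fQ''$ is precisely what the paper handles by passing to $\fQ_2'$. One minor point: the paper sets $\delta=\delta_1+\delta_2$, which tacitly assumes both $\fQ_1$ and $\fQ_2$ are nonzero; your choice $\delta=\min(\delta_1,\delta_2)$ is the cleaner one that covers the degenerate cases you flag.
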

\begin{proof} First note that the line bundle $\det(E)\cong\det(E_1)\otimes\det(E_2)$ is ample. Let $f \colon C \to X$ be a finite morphism and let $\fQ$ be a quotient Higgs bundle of $f^\ast \fE$. We can form the following diagram with exact rows and columns:
\begin{displaymath}
\xymatrix{
0\ar[r] & f^{*}\fE_1\ar[r]\ar[d]& f^{*}\fE\ar[r]\ar[d] & f^{*}\fE_2\ar[r]\ar[d] & 0\\
0 \ar[r] & \fQ_1\ar[r]\ar[d] & \fQ\ar[d] \ar[r]& \fQ_2 \ar[d] \ar[r] & 0\\
 & 0 & 0 & 0
}
\end{displaymath}
Let $\fQ_2'$ be $\fQ_2$ modulo its torsion, and let $h \in N^1(X)$ be an ample class. By Corollary \ref{critquots} there exist $\delta_1,\delta_2\in\mathbb{R}_{>0}$ such that
$$\deg ( \fQ_1) \ge \delta_1 \int_Cf^\ast h, \qquad \deg ( \fQ_2) \ge \deg( \fQ'_2)\ge \delta_2 \int_Cf^\ast h.$$By letting $\delta= \delta_1+ \delta_2$ we have $\deg (\fQ) \ge \delta \int_C f^\ast h$. Again by Corollary \ref{critquots}, $\fE$ is H-ample.
\end{proof}

The analogue of Theorem \ref{ampletensors} for H-nef Higgs bundles was proved in \cite{B:B:G}. On the other hand,  the version of Theorem \ref{ampleext} for H-nef Higgs bundles is proved along the same lines.


\bigskip
\section{An application to surfaces of general type}\label{4}
In this section we give an application of the H-ampleness  and H-nefness criteria as expressed by Theorems \ref{critmumin}  and  \ref{thm:Hnefcrit} to projective surfaces of general type over an algebraically closed field $\Bbb K$ of characteristic 0. We recall that the Chern classes of a minimal surface of general type $X$ satisfy the Miyaoka-Yau inequality
\begin{equation}\label{MY}
3c_2(X) \ge c_1(X)^2.
\end{equation}
This was proved in \cite{Yau1, Miyaoka-Chern} over the complex numbers and in \cite{Miyaoka-Kodaira} over $\Bbb K$. Moreover, Miyaoka in \cite{Miyaoka-Kataka}, working over $\C$, proved that the cotangent bundle of a minimal surface of general type that saturates the inequality is ample. We extend this result for a field $\Bbb K$ as above relying on Theorems \ref{critmumin} and \ref{thm:Hnefcrit}. This will be based on the properties of the so called Simpson system, i.e., the Higgs bundle $\gS=(S, \phi)$, where $S=\Omega_X^1 \oplus \cO_X$, and $\phi(\omega, f)=(0, \omega)$. Simpson proved in  \cite{S:CT:0}, over $\C$,  that $\gS$ is stable, so that the Bogomolov inequality for Higgs bundles implies the inequality \eqref{MY}. Then in \cite{L:A}  Langer extended this result  when $\Bbb K$ is a field as above.

Actually, when the Miyaoka-Yau equality is saturated, the Simpson system is curve semistable.

\begin{proposition} If $X$ is a projective minimal surface of general type over $\mathbb K$ such that $3c_2(X)=c_1(X)^2$, then the
 Higgs bundle $\gS$ is curve semistable. \label{prop:Simpcss}
\end{proposition}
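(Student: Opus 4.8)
The plan is to deduce curve semistability directly from Theorem \ref{thm:css}, which needs two inputs: semistability of $\gS$ with respect to some polarization, and the vanishing of the discriminant $\Delta(S)$. The first input is precisely the theorem of Simpson \cite{S:CT:0}, extended to the characteristic-$0$ field $\mathbb{K}$ by Langer \cite{L:A}, asserting that the Simpson system $\gS=(S,\phi)$ is stable; this holds for every minimal surface of general type and does not use the saturation hypothesis. In particular $\gS$ is semistable, so the semistability assumption of Theorem \ref{thm:css} is met, and the real work is the computation of $\Delta(S)$.

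For the discriminant, I would first record that $S=\Omega_X^1\oplus\cO_X$ has rank $r=3$, since $\Omega_X^1$ has rank $2$ on the surface $X$. Using the Whitney sum formula together with $c_1(\cO_X)=c_2(\cO_X)=0$, and the fact that dualizing a rank-$2$ bundle flips the sign of $c_1$ but fixes $c_2$, I would write
$$c_1(S)=c_1(\Omega_X^1)=-c_1(X),\qquad c_2(S)=c_2(\Omega_X^1)=c_2(X),$$
so that $c_1(S)^2=c_1(X)^2$ independently of the sign convention. Substituting into $\Delta(E)=c_2(E)-\tfrac{r-1}{2r}c_1(E)^2$ with $r=3$, so that $\tfrac{r-1}{2r}=\tfrac13$, gives
$$\Delta(S)=c_2(X)-\tfrac13\,c_1(X)^2=\tfrac13\bigl(3c_2(X)-c_1(X)^2\bigr).$$
The saturation hypothesis $3c_2(X)=c_1(X)^2$ then yields $\Delta(S)=0$, and with $\gS$ semistable this lets Theorem \ref{thm:css} apply verbatim to conclude that $\gS$ is curve semistable.

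As for difficulty, the only genuinely deep ingredient is the stability of the Simpson system over $\mathbb{K}$, which we are entitled to cite from \cite{S:CT:0, L:A}; the Chern class computation is elementary. I therefore expect no serious obstacle, the essential observation being simply that the Miyaoka-Yau equality $3c_2(X)=c_1(X)^2$ is exactly the condition $\Delta(S)=0$ needed to invoke Theorem \ref{thm:css}.
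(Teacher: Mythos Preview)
Your proposal is correct and follows exactly the paper's approach: invoke Theorem \ref{thm:css} using the Simpson--Langer stability result for $\gS$ together with the computation $\Delta(S)=c_2(X)-\tfrac13 c_1(X)^2=0$, which is precisely the saturation hypothesis. The paper's proof is just a one-line version of what you wrote.
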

\begin{proof} This follows from \ref{thm:css} since
$$\Delta(S) = c_2(X) - \tfrac13 c_1(X)^2 = 0. $$
\end{proof}

\begin{theorem} Under the same hypotheses of Proposition \ref{prop:Simpcss}, 
 the Higgs bundle $\gS_\beta= \gS(-\beta K_X)$ is H-nef for every rational\footnote{As it is customary, we formally consider twistings by rational divisors, which make sense after pulling back to a (possibly ramified) finite covering of $X$; on the other hand, the properties of being semistable, H-ample, H-nef are invariant under such coverings.} $\beta\le \frac13$.
 \end{theorem}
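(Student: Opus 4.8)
The plan is to verify the H-nef criterion of Theorem~\ref{thm:Hnefcrit}: it suffices to show that $\mumin(f^\ast \gS_\beta) \ge 0$ for every morphism $f \colon C \to X$ from a smooth projective curve. Since tensoring a Higgs bundle by a line bundle $L$ carries the $\phi$-invariant subsheaves to their twists and hence shifts every slope in the Harder-Narasimhan filtration by $\deg L$, and since pullback commutes with such a twist, I would first record the identity
\[
\mumin(f^\ast \gS_\beta) = \mumin(f^\ast \gS) - \beta \int_C f^\ast K_X.
\]
For rational $\beta$ this is to be read on a finite (possibly ramified) cover as in the footnote, or simply with $\Q$-coefficients; H-nefness is insensitive to this passage.

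The next step is to evaluate $\mumin(f^\ast \gS)$ by means of curve semistability. By Proposition~\ref{prop:Simpcss} the Simpson system $\gS$ is curve semistable, so $f^\ast \gS$ is semistable and its Harder-Narasimhan filtration has a single step; hence $\mumin(f^\ast \gS) = \mu(f^\ast S)$. As $S = \Omega^1_X \oplus \cO_X$ has rank $3$ and $\det S \cong \cO_X(K_X)$, this slope equals $\tfrac13 \int_C f^\ast K_X$. Substituting gives
\[
\mumin(f^\ast \gS_\beta) = \left(\tfrac13 - \beta\right) \int_C f^\ast K_X.
\]

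Finally I would close the argument with the positivity of the two factors. Because $X$ is a minimal surface of general type, $K_X$ is nef, so $\int_C f^\ast K_X = (\deg f)\,(K_X \cdot \bar C) \ge 0$ for the image curve $\bar C$ (and it vanishes when $f$ is constant), while the hypothesis $\beta \le \tfrac13$ gives $\tfrac13 - \beta \ge 0$. The product is therefore nonnegative, and Theorem~\ref{thm:Hnefcrit} yields the H-nefness of $\gS_\beta$. The computation itself is short; the only genuine input is curve semistability, which is exactly what upgrades the a priori inequality $\mumin(f^\ast\gS) \le \mu(f^\ast S)$ to the equality needed to pin down $\mumin(f^\ast \gS)$, and this is where the saturation hypothesis $3c_2(X)=c_1(X)^2$ enters (through $\Delta(S)=0$ and Theorem~\ref{thm:css}). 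The numerical bound $\tfrac13$ is not an artifact but records the rank $3$ of $S$; I do not expect a serious obstacle beyond making the rational twist rigorous, which the stated invariance of H-nefness under finite covers handles.
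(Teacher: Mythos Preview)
Your proof is correct and follows essentially the same route as the paper: use curve semistability to identify $\mumin(f^\ast\gS_\beta)$ with the slope $\mu(f^\ast\gS_\beta)=(\tfrac13-\beta)\int_C f^\ast K_X$, then invoke nefness of $K_X$ and Theorem~\ref{thm:Hnefcrit}. The only cosmetic difference is that the paper applies curve semistability directly to the twisted bundle $\gS_\beta$, whereas you apply it to $\gS$ and then shift by the twist; the computations are equivalent.
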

 
\begin{proof} $\gS_\beta$ is curve semistable, so that for every morphism $f\colon C \to X$, the pullback Higgs bundle
$f^\ast \gS_\beta$ is semistable, hence $$\mu_{\text{\tiny min}}(f^\ast \gS_\beta) = \mu(f^\ast \gS_\beta)$$  and 
$$\mu_{\text{\tiny min}}(f^\ast \gS_\beta) = \tfrac13 \int_C f^\ast c_1(S_\beta ) = \tfrac13 C' \cdot c_1(S(-\beta K_X)) = 
(\tfrac13-\beta) C'\cdot K_X \ge 0 $$  where $C'=f(C)$ 
(the last inequality holding as $K_X$ is nef) so that the claim follows from the H-nefness criterion (Theorem \ref{thm:Hnefcrit}).
\end{proof}
\begin{corollary} \label{Canample}The bundle $\Omega_\beta=\Omega^1_X(-\beta K_X)$ is nef for every rational $\beta\le \frac13$. As a consequence,
$K_X$ is ample. 
 \end{corollary}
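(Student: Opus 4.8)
The plan is to split the statement into its two assertions and treat them separately: the nefness of $\Omega_\beta$ is a quick consequence of the preceding theorem, whereas the ampleness of $K_X$ will require a Nakai--Moishezon argument fed by that nefness.

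First I would prove that $\Omega_\beta$ is nef. The key point is that $\Omega^1_X$ is a Higgs quotient of the Simpson system $\gS=(S,\phi)$, $S=\Omega^1_X\oplus\cO_X$. Indeed, the summand $\cO_X$ is $\phi$-invariant, since $\phi(0,f)=(0,0)$, so it is a Higgs subsheaf and the quotient $S/\cO_X\cong\Omega^1_X$ inherits a Higgs field; this induced field is zero because $\phi(\omega,0)=(0,\omega)$ lands in the $\cO_X$-summand tensored with $\Omega^1_X$, hence dies in the quotient. Twisting by $\cO_X(-\beta K_X)$, the Higgs bundle $(\Omega_\beta,0)$ is a Higgs quotient of $\gS_\beta$. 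Since $\gS_\beta$ is H-nef by the theorem just proved, Proposition \ref{propnef}(2) shows $(\Omega_\beta,0)$ is H-nef, and as its Higgs field vanishes, H-nefness coincides with ordinary nefness; thus $\Omega_\beta$ is nef.

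For the ampleness of $K_X$ I would argue by contradiction. Recall that $X$ is minimal of general type, so $K_X$ is nef and $K_X^2=c_1(X)^2>0$; by the Nakai--Moishezon criterion it suffices to show $K_X\cdot C>0$ for every irreducible curve $C$. Suppose instead $K_X\cdot C=0$ for some irreducible $C$. Since $K_X^2>0$ and $C$ is not numerically trivial (it meets an ample class positively), the Hodge index theorem forces $C^2<0$, and adjunction $2p_a(C)-2=C^2+K_X\cdot C=C^2$ then yields $p_a(C)=0$ and $C^2=-2$. Hence $C$ is a smooth rational $(-2)$-curve. Restricting to $C\cong\mathbb{P}^1$, the conormal sequence $0\to N^*_{C/X}\to\Omega^1_X|_C\to\Omega^1_C\to0$ becomes $0\to\cO(2)\to\Omega^1_X|_C\to\cO(-2)\to0$ (using $N_{C/X}=\cO_C(C)$ of degree $C^2=-2$), and it splits since $\mathrm{Ext}^1(\cO(-2),\cO(2))=H^1(\mathbb{P}^1,\cO(4))=0$; thus $\Omega^1_X|_C\cong\cO(2)\oplus\cO(-2)$. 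Because $K_X\cdot C=0$, the twist $\cO_C(-\beta K_X)$ is trivial on $C$, so $\Omega_\beta|_C\cong\cO(2)\oplus\cO(-2)$ has the quotient $\cO(-2)$ of negative degree, contradicting the nefness of $\Omega_\beta$ established above. Therefore no such curve exists and $K_X$ is ample.

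The hard part is the second assertion: converting nefness of $\Omega_\beta$ into strict positivity of $K_X$. The delicate steps are the Nakai--Moishezon reduction to testing against curves and, above all, the geometric identification of a hypothetical curve with $K_X\cdot C=0$ as a smooth rational $(-2)$-curve, which combines the Hodge index theorem with adjunction; once that is in place, the splitting of $\Omega^1_X|_C$ and the resulting contradiction are routine. I would also note explicitly that the rational twist causes no trouble here, precisely because $(-\beta K_X)\cdot C=0$ on the test curve, so it does not affect the degree of the offending quotient.
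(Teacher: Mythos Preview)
Your proof is correct. For the nefness of $\Omega_\beta$ you argue exactly as the paper does. For the ampleness of $K_X$, however, you take a genuinely different route. The paper observes that for \emph{any} irreducible curve $C$ with normalization $\iota\colon\tilde C\to X$ the natural surjection $\iota^*\Omega_{1/3}\to\cO_{\tilde C}(K_{\tilde C}-\tfrac13\iota^*K_X)$ exhibits the target as a quotient of a nef bundle, giving the genus bound $2g(\tilde C)-2\ge\tfrac13 K_X\cdot C$; this rules out all rational curves, and the paper then invokes the standard characterization that $K_X$ on a smooth minimal surface of general type is ample iff there are no $K_X$-trivial rational curves. You instead suppose $K_X\cdot C=0$, use Hodge index plus adjunction to identify $C$ as a smooth $(-2)$-curve, and compute $\Omega^1_X|_C\cong\cO(2)\oplus\cO(-2)$ via the conormal sequence to exhibit an explicit negative-degree quotient. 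Your argument is more self-contained (it does not call on the ampleness criterion for $K_X$ as a black box, effectively reproving it), while the paper's approach yields the uniform inequality $2g(C)-2\ge\tfrac13 K_X\cdot C$ for all curves, which is what generalizes cleanly to higher dimension in Proposition~\ref{higherdim}.
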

\begin{proof} $\Omega_\beta$  with the zero Higgs field  is a Higgs quotient of $\gS_\beta$, hence 
  it is H-nef  and then nef in the usual sense.
Let $C\subset X$ be an irreducible curve, $\tilde C$ its normalization, and $\iota\colon\tilde C\to X$. We have a morphism
 $$ \iota^\ast \Omega_\frac13 \to \cO_{\tilde C}(K_{\tilde C} - \tfrac13\iota^\ast K_X)  $$
 so that $ \cO_{\tilde C}(K_{\tilde C} - \tfrac13\iota^\ast K_X) $ is nef, i.e., its degree is nonnegative, and
  $$ 2g(C) - 2  \ge 2g(\tilde C) - 2\ge \tfrac13K_X\cdot C .$$
As a consequence   $X$ has no rational curves.
Since the canonical bundle of a smooth minimal surface of general type $Z$ is ample if and only if  $Z$ has no rational curves $C$ such that $K_Z\cdot C =0 $, it follows that  $K_X$ is ample.
 \end{proof}   
Now, 
$$ \Omega_X^1 \simeq \Omega_X^1(-\tfrac13K_X) \otimes \cO_X(\tfrac13K_X)$$
so that $ \Omega_X^1 $ is the tensor product of a nef bundle by an ample line bundle, and therefore is ample.
      It may instructive to deduce this from the H-ampleness criterion given by Theorem \ref{critmumin}.
    \begin{lemma} Again under the  hypotheses of Proposition \ref{prop:Simpcss}, the Higgs bundle $\gS$ (the Simpson system)  is H-ample.
    \end{lemma}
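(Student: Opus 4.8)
The plan is to verify the two conditions of the H-ampleness criterion (Theorem \ref{critmumin}) for the Simpson system $\gS=(S,\phi)$ with $S=\Omega^1_X\oplus\cO_X$. The first condition, ampleness of $\det(S)=\det(\Omega^1_X)\otimes\cO_X=\cO_X(K_X)$, is immediate from Corollary \ref{Canample}, which established that $K_X$ is ample. So the entire content lies in establishing condition (2): the existence of a uniform $\delta\in\Rplus$ such that $\mumin(f^\ast\gS)\ge\delta\int_C f^\ast h$ for every finite morphism $f\colon C\to X$, where $h$ is a fixed ample class.

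To control $\mumin(f^\ast\gS)$, I would exploit curve semistability. By Proposition \ref{prop:Simpcss} the Higgs bundle $\gS$ is curve semistable, so $f^\ast\gS$ is semistable for every $f\colon C\to X$; hence $\mumin(f^\ast\gS)=\mu(f^\ast\gS)$. Now $\mu(f^\ast\gS)=\tfrac12\int_C f^\ast c_1(S)=\tfrac12\int_C f^\ast c_1(\Omega^1_X)=\tfrac12\,C'\cdot K_X$, where $C'=f(C)$ and $c_1(S)=c_1(\Omega^1_X)=K_X$. The task is therefore to bound $\tfrac12\,C'\cdot K_X$ uniformly below by a positive multiple of $\int_C f^\ast h=C'\cdot h$ (counting multiplicities of the finite map appropriately, which affects both sides by the same degree factor and so cancels).

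The key step is then a purely numerical one: since $K_X$ is ample (Corollary \ref{Canample}), the Kleiman criterion (Corollary 1.4.11 of \cite{L:RK}) supplies an $\epsilon\in\Rplus$ with $\dfrac{K_X\cdot\bar C}{h\cdot\bar C}\ge\epsilon$ for every irreducible curve $\bar C\subset X$. Applying this with $\bar C=C'$ yields
$$
\mumin(f^\ast\gS)=\tfrac12\,C'\cdot K_X\ge\tfrac{\epsilon}{2}\,C'\cdot h=\delta\int_C f^\ast h,
$$
with $\delta=\epsilon/2$. This establishes condition (2) with a uniform constant independent of $f$, and together with condition (1) Theorem \ref{critmumin} gives the H-ampleness of $\gS$.

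I do not expect a serious obstacle here; the proof is essentially a bookkeeping exercise combining the already-established ampleness of $K_X$, the curve semistability of $\gS$, and the Kleiman criterion. The only point that requires a little care is the interplay between $\mumin$ and $\mu$ via semistability, and the correct handling of the pullback degree factor $\deg(f\colon C\to C')$, which appears identically on both sides of the inequality and therefore does not affect the argument. One should also note that the argument uses the ampleness of $K_X$ in an essential way, so the preceding Corollary \ref{Canample} is genuinely needed rather than merely illustrative.
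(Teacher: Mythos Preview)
Your approach is essentially the paper's: use curve semistability to identify $\mumin(f^\ast\gS)$ with $\mu(f^\ast\gS)$, then invoke ampleness of $K_X$ from Corollary \ref{Canample}. Two small points: first, $X$ is a surface, so $\rank S=\rank\Omega^1_X+1=3$, and your $\tfrac12$ should be $\tfrac13$ throughout (this does not affect the argument, only the constant); second, the paper avoids the detour through Kleiman by simply choosing $h=K_X$ and $\delta=\tfrac13$, which gives $\mumin(f^\ast\gS)=\tfrac13\int_C f^\ast K_X=\delta\int_C f^\ast h$ on the nose.
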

\begin{proof} Fix an ample class $h$. A Higgs bundle $\fE$  is H-ample if and only if its determinant is ample and there exists a $\delta\in\R_{>0}$ such that
    $$ \mu_{\text{\tiny min}} (f^\ast \fE) \ge \delta \int_C f^\ast h $$
    for all $f\colon C \to X$ as before.
    
    Note that $\det(S)= K_X$ is ample by Corollary \ref{Canample}. Take $h=K_X$ and $\delta = \frac13$. Since $\gS$ is curve semistable
    $$  \mu_{\text{\tiny min}} (f^\ast \gS)  = \mu(f^\ast \gS)  = \tfrac13 \int_C f^\ast c_1(S) = \delta  \int_C f^\ast K_X.$$     
 \end{proof}
Note that $E$, as an ordinary bundle, is neither semistable nor ample.

Again, $\Omega_X^1$ with the zero Higgs field is a Higgs quotient of $\gS$, so that it is ample.

In the higher dimensional case one does not know if the Simpson system $\gS$  is semistable  with respect to some polarization $H$. If we assume that $X$ is a smooth projective variety of dimension $n$,   
that the associated Simpson is semistable, and also suppose that
\begin{equation}\label{higherMY}
\left[c_2(X)-\frac{n}{2(n+1)}c_1(X)^2\right]\cdot H^{n-2} =0,
\end{equation} and that the canonical class $K_X$ is ample,  then the same argument works to show that $\Omega_X^1$ is ample.

More generally, we have:

\begin{proposition}\label{higherdim}
Let $X$ be a smooth projective variety of dimension $n$ whose canonical bundle $K_X$ is nef. Assume that its Simpson system is semistable with respect to some polarization $H$, and the equality \eqref{higherMY} holds. Then
\begin{enumerate}
\item The Simpson system of $X$ is curve semistable and H-nef.
\item The twisted cotangent bundle $\Omega_\beta=\Omega_X^1(-\beta K_X)$ is nef for every rational $\beta \le \frac{1}{n+1}$. In particular the cotangent bundle $\Omega_X^1$ is nef.
\item $X$ has no irreducible curves of genus 0.
\item If $K_X$ is ample, then $\Omega^1_X$ is ample, and $X$ has no curves of genus 0 and 1.
\end{enumerate} 
\end{proposition}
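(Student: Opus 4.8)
The plan is to mirror, in arbitrary dimension $n$, the chain of arguments established in the surface case, replacing the numerical discriminant $\Delta(E)$ by the $H$-dependent expression in \eqref{higherMY} and the constant $\tfrac13$ by $\tfrac{1}{n+1}$. For item (1), I would first invoke the stated semistability of the Simpson system $\gS=(S,\phi)$ with $S=\Omega_X^1\oplus\cO_X$ with respect to $H$. The hypothesis \eqref{higherMY} says precisely that the relevant discriminant-type class pairs to zero against $H^{n-2}$, which is the higher-dimensional analogue of $\Delta(S)=0$; under the stated assumptions this should let me apply (a suitable version of) Theorem \ref{thm:css} to conclude that $\gS$ is curve semistable. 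Then for every $f\colon C\to X$ the pullback $f^\ast\gS$ is semistable, so $\mumin(f^\ast\gS)=\mu(f^\ast\gS)$, and since $c_1(S)=-K_X$ (note $\det S=\det\Omega_X^1=-K_X$) one computes $\mu(f^\ast\gS_\beta)=\bigl(\tfrac{1}{n+1}-\beta\bigr)\,C'\cdot K_X\ge 0$ for $\beta\le\tfrac{1}{n+1}$, where $C'=f(C)$ and the sign uses that $K_X$ is nef. By the H-nefness criterion (Theorem \ref{thm:Hnefcrit}) this gives H-nefness of $\gS_\beta$, and in particular of $\gS$ itself.

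For item (2), I would observe that $\Omega_\beta=\Omega_X^1(-\beta K_X)$, with the zero Higgs field, is a Higgs quotient of $\gS_\beta$ (projecting off the $\cO_X$ summand), so by Proposition \ref{propnef}(2) it is H-nef, hence nef in the ordinary sense; taking $\beta=0$ yields that $\Omega_X^1$ is nef. For item (3), I would imitate the curve argument of Corollary \ref{Canample}: for an irreducible curve $C$ with normalization $\iota\colon\tilde C\to X$, the bundle $\iota^\ast\Omega_{1/(n+1)}$ surjects onto a line bundle of the form $\cO_{\tilde C}\bigl(K_{\tilde C}-\tfrac{1}{n+1}\iota^\ast K_X\bigr)$, whose nefness forces $2g(\tilde C)-2\ge\tfrac{1}{n+1}K_X\cdot C\ge 0$; since $K_X\cdot C\ge 0$ this gives $g(\tilde C)\ge 1$, ruling out genus-$0$ curves.

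For item (4), assuming now that $K_X$ is ample, I would write $\Omega_X^1\simeq\Omega_X^1(-\tfrac{1}{n+1}K_X)\otimes\cO_X(\tfrac{1}{n+1}K_X)$, exhibiting $\Omega_X^1$ as the tensor product of the nef bundle $\Omega_{1/(n+1)}$ from (2) with an ample line bundle, hence ample. Alternatively, and to stay within the Higgs framework, one can show $\gS$ itself is H-ample via Theorem \ref{critmumin} with $h=K_X$ and $\delta=\tfrac{1}{n+1}$ (the determinant $\det S=K_X$ being ample), and then use Proposition \ref{prop1.1}(2) to pass to the quotient $\Omega_X^1$. Finally, for the exclusion of genus-$0$ and genus-$1$ curves, ampleness of $\Omega_X^1$ forces strict positivity $K_{\tilde C}\cdot\tilde C>0$ along normalizations, whence $2g(\tilde C)-2>0$, i.e. $g\ge 2$.

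The step I expect to be the main obstacle is the first one: justifying that \eqref{higherMY} together with $H$-semistability yields \emph{curve} semistability in dimension $n\ge 3$. In the surface case this is exactly Theorem \ref{thm:css} ($\Delta(S)=0$ plus semistability implies curve semistability), but in higher dimension the vanishing of the discriminant is replaced by the weaker, polarization-dependent condition \eqref{higherMY}, and Theorem \ref{thm:css} as stated concerns $\Delta(E)=0$ on surfaces. I would therefore need either an $H$-restricted version of the Bogomolov-type curve-semistability theorem, or a Mehta-Ramanathan type restriction argument reducing to curves cut out by general members of $|mH|$, to legitimately pass from \eqref{higherMY} to the semistability of all pullbacks $f^\ast\gS$; the remaining items then follow from the surface-case arguments essentially verbatim.
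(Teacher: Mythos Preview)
The paper gives no explicit proof of this proposition; it only remarks that ``the same argument works'' as in the surface case and then states the result. Your proposal is exactly that reconstruction---replace $\tfrac13$ by $\tfrac{1}{n+1}$ and rerun Proposition~\ref{prop:Simpcss}, the H-nefness argument, Corollary~\ref{Canample}, and the H-ampleness Lemma---so the approach matches what the paper intends.

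Two minor slips. First, in item~(1) you write $c_1(S)=-K_X$ and $\det S=\det\Omega_X^1=-K_X$; in fact $\det(\Omega_X^1\oplus\cO_X)=K_X$. Your displayed formula $\mu(f^\ast\gS_\beta)=(\tfrac{1}{n+1}-\beta)\,C'\cdot K_X$ is nonetheless correct, so this is only a typo in the parenthetical. Second, in item~(4) the expression ``$K_{\tilde C}\cdot\tilde C>0$'' is malformed; you mean $\deg K_{\tilde C}>0$. The cleanest route there is to observe that once $K_X$ is ample the inequality from~(3) becomes strict, $2g(\tilde C)-2\ge\tfrac{1}{n+1}K_X\cdot C>0$, forcing $g(\tilde C)\ge2$.

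Your caveat about step~(1) is well placed and is not merely a gap in your write-up: the paper glosses over the same point. Theorem~\ref{thm:css} as stated requires $\Delta(E)=0$ in $A^2(X)_\Q$, whereas \eqref{higherMY} only asserts $\Delta(S)\cdot H^{n-2}=0$. One needs the stronger form---$H$-semistability together with $\Delta(E)\cdot H^{n-2}=0$ implies curve semistability---which is available in the literature around \cite{B:HR} and its sequels but is not the version quoted here. With that input, items~(2)--(4) follow from the surface-case arguments verbatim, exactly as you outline.
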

Item (3) should be compared with the result that the canonical bundle of a smooth projective variety that contains no  rational curves is  nef, 
see e.g.~Theorem 1.16 in \cite{Debarre-Gael}.

\bigskip
\noindent{\bf Statement about competing or financial interests.} The authors have no competing or financial  interests to declare that are relevant to the content of this article.
 
 \let\OLDthebibliography\thebibliography
 \renewcommand\thebibliography[1]{
  \OLDthebibliography{#1}
  \setlength{\parskip}{2pt}
  \setlength{\itemsep}{2pt plus 0.3ex}
}

\bigskip
\frenchspacing


\begin{thebibliography}{10}
\bibitem{B:B:G} I Biswas, U Bruzzo, S Gurjar,  \emph{Higgs bundles and fundamental group schemes}. Adv. Geom. \textbf{19} (2019) 381--388.
\bibitem{B:C} U Bruzzo, A Capasso,  \emph{Filtrations of numerically flat Higgs bundles and curve semistable Higgs bundles on Calabi-Yau manifolds}. Adv. Geom. {\bf  23} (2023)  215--222.  
\bibitem{B:GO:1} U Bruzzo, B Gra\~na Otero,  \emph{Numerically flat Higgs vector bundles}. Commun. Contemp. Math. \textbf{9} (2007) 437--446.
\bibitem{B:GO:3}  ---,  \emph{Semistable and numerically effective principal (Higgs) bundles}. Adv. Math. \textbf{226} (2011) 3655--3676.
\bibitem{B:GO:HR} U Bruzzo, B Gra\~na Otero, D Hern\'andez Ruip\'erez,  \emph{On a conjecture about Higgs bundles for rank $2$ and some inequalities}.  {\tt arXiv:2208.10554 [math.AG]}.
\bibitem{B:HR} U Bruzzo, D Hern\'andez Ruip\'erez,  \emph{Semistability vs. nefness for (Higgs) vector bundles}. Diff. Geom. Appl. \textbf{24} (2006) 403--416.
\bibitem{B:L:LG} U Bruzzo, V Lanza, A Lo Giudice, \emph{Semistable Higgs bundles on Calabi-Yau manifolds.} Asian. J. Math. \textbf{23} 
 (2019) 905--918.
\bibitem{B:LG} U Bruzzo, A Lo Giudice,  \emph{Restricting Higgs bundles to curves}. Asian J. Math. \textbf{20} (2016) 399--408.
\bibitem{C:P} F Campana, T Peternell, \emph{Projective manifolds whose tangent bundles are numerically effective}. Math. Ann. \textbf{289} (1991) 169--187.
\bibitem{Debarre-Gael} O Debarre, \emph{Rational curves on algebraic varieties.} Lecture notes for the GAeL XVIII conference, Coimbra, Portugal 2010.
\bibitem{F:W} W Fulton, \emph{Intersection theory,} 2nd ed., Springer-Verlag, Berlin  1998.
\bibitem{Hartshorne} R Hartshorne, \emph{Ample vector bundles}. Publ. Math. IHES \textbf{29} (1966) 63--94.
\bibitem{L:A} A Langer,  \emph{Bogomolov's inequality for Higgs sheaves in positive characteristic}. Inv. Math. \textbf{199} (2015) 889--920.
\bibitem{L:LG} V Lanza, A  Lo Giudice,   \emph{Bruzzo's conjecture}.
J.  Geom.   Phys.  \textbf{118} (2017)  181--191.


\bibitem{L:RK} R K Lazarsfeld, \emph{Positivity in algebraic geometry. I and II}. Springer-Verlag, Berlin  2004.
 \bibitem{Miyaoka-Chern} Y Miyaoka, \emph{On the Chern numbers of surfaces of general type}.  Inventiones Mathematicae \textbf{42} (1977) 225--237. 
 \bibitem{Miyaoka-Kataka}  ---, \emph{Algebraic surfaces with positive indices}. In K. Ueno ed., Classification of algebraic and analytic manifolds (Katata, 1982), Progr. Math. 39, Birk\"auser, Boston MA 1983. pp. 281--301.
 \bibitem{Miyaoka-Kodaira}  ---, \emph{The {C}hern classes and {K}odaira dimension of a minimal variety}. In Algebraic geometry ({S}endai, 1985), Adv. Stud. Pure Math. 10, North-Holland, Amsterdam 1987.  pp. 449--476.
\bibitem{N} N Nakayama, {\em Normalized tautological divisors of semi-stable vector bundles}. S\=u\-ri\-kai\-se\-ki\-ken\-ky\=usho K\=oky\=uroku  (Kyoto) \textbf{1078} (1999) 167--173. 
\bibitem{S:CT:0} C T Simpson, \emph{Constructing variations of  Hodge structure using Yang-Mills theory and applications to uniformization}. J. Amer. Math. Soc. \textbf{1} (1988) 867--918.
\bibitem{Yau1} S-T Yau, \emph{Calabi's conjecture and some new results in algebraic geometry}. Proc. Nat. Acad. Sci.  \textbf{ 74}  (1977) 1798--1799.
\end{thebibliography}
 \end{document}